\newtheorem{thm}{Theorem}[section]
\newtheorem{lem}[thm]{Lemma}
\newtheorem{cor}[thm]{Corollary}
\theoremstyle{definition}
\newtheorem{de}[thm]{Definition}
\numberwithin{equation}{section}
\newcommand{\N}{\mathbb{N}}
\newcommand{\Som}{\operatorname{Som}}
\newcommand{\ob}[1]{{\mathbb{#1}}} 
\newcommand{\vb}[1]{\bm{#1}} 
\newcommand{\algop}[2]{( {#1}, {#2} )}
\newcommand{\ups}{{\uparrow}}
\newcommand{\VecTwo}[2]{
   \big(
   \begin{smallmatrix}
      #1 \\ #2
   \end{smallmatrix}
   \big)
   }
\date{\today}
\title[Some well partially ordered sets]{Dickson's Lemma, Higman's Theorem and Beyond: a survey of
       some basic results in order theory}
\author{Erhard Aichinger}
\address{
Institut für Algebra,
Johannes Kepler Universit\"at Linz, Altenberger Strasse 69, 4040 Linz,
Austria}
\email{erhard@algebra.uni-linz.ac.at}
\author{Florian Aichinger}
\address{
 Institut für Algebra,
Johannes Kepler Universit\"at Linz, Altenberger Strasse 69, 4040 Linz,
Austria}
\email{florian@algebra.uni-linz.ac.at}
\subjclass[2010]{06A06(03E04)}
\keywords{well partial order, chain condition, word embedding, Dickson's Lemma, filters}
\thanks{
  Supported by the Austrian Science Fund (FWF):~P29931.
       }
\begin{document}

\begin{abstract}
  We provide proofs for the fact that certain orders
  have no descending chains and no antichains.
\end{abstract}    
  
\maketitle

\section{Introduction} \label{sec:intro}

We investigate some finiteness conditions of a partially ordered set $\algop{A}{\le}$.
As usual, we write $a \ge b$ for $b \le a$, and $a < b$ (or $b > a$) for $a \le b$ and $a \neq b$. Furthermore, $a \perp b$ stands for
($a \not\le b$ and $b \not \le a$); in this case, we call $a$ and $b$
\emph{uncomparable}.
A sequence $(a_i)_{i \in \N_0}$ from $A$ is a \emph{descending chain} if
$a_i > a_{i+1}$ for all $i \in \N_0$, it is an \emph{antichain}
if for all $i, j \in \N_0$, $a_i \le a_j$ implies $i = j$, and it is
an \emph{ascending chain} if for all $i \in \N_0$, $a_i < a_{i+1}$.
A subset $U$ of $A$ is called \emph{upward closed} if
$u \in U$, $a \in A$, and $u \le a$ imply $a \in U$.
For a subset $B$ of $A$, we define the \emph{upward closed set generated by $B$}
by $\ups B := \{ a \in A \mid \exists b \in B : b \le a \}$.
By $U \algop{A}{\le}$ or simply $U(A)$ we denote the
set of upward closed subsets of $A$. This set can be ordered by set inclusion $\subseteq$.

One frequently uses the fact that certain partially ordered sets have no descending chain
and no antichain; such orders are called \emph{well partial orders}.
\cite{La:WASO,AH:FTIS} provide powerful techniques to establish that
a given order is a well partial order.
In this note, we restrict our attention to some particular ordered sets:
The first set that we consider is
the set $\N_0^m$ of vectors of natural numbers of some fixed length $m$,
which we order by $(a_1, \ldots, a_m)
\le (b_1, \ldots, b_m)$ if $a_i \le b_i$ for all $i \in \{1,\ldots,m\}$, and
we provide proofs of the following well known facts:
\begin{thm} \label{thm:n}
  Let $m \in \N$.
  \begin{enumerate}
  \item \label{it:n1} $(\N_0^m, \le)$ has no descending chain and no antichain \cite[Lemma~A]{Di:FOTO}.
  \item \label{it:n2} $(U(\N_0^m, \le), \subseteq)$ has no ascending chain and no antichain  \cite{Ma:AOMI}, \cite[Corollary~1.8]{AP:OOMI}.
  \end{enumerate}
\end{thm}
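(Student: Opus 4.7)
For Part~(\ref{it:n1}), the plan is to induct on $m$, combined with the standard equivalence that $(P,\le)$ has no descending chain and no antichain if and only if every infinite sequence in $P$ contains an infinite nondecreasing subsequence. (The nontrivial direction splits the index set of a sequence into ``good'' indices $i$, having some $j>i$ with $a_i\le a_j$, and the rest, and applies Ramsey's theorem to a bad subsequence.) The base case $m=1$ is the well-ordering and total ordering of $\N_0$. For the inductive step I would, starting from any infinite sequence in $\N_0^{m-1}\times\N_0$, first extract a subsequence nondecreasing in the first $m-1$ coordinates (by the inductive hypothesis), then thin further to be nondecreasing in the last coordinate (by the $m=1$ case); the result is a nondecreasing subsequence in $\N_0^m$.

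For Part~(\ref{it:n2}), no ascending chain, I would use a direct bad-sequence argument. Suppose $U_0\subsetneq U_1\subsetneq\cdots$ and pick $u_i\in U_i\setminus U_{i-1}$ for each $i\ge 1$. If $i<j$ and $u_i\le u_j$, then upward closure gives $u_j\in\ups\{u_i\}\subseteq U_i\subseteq U_{j-1}$, contradicting $u_j\notin U_{j-1}$. Hence $u_i\not\le u_j$ for all $i<j$, and by the characterization above this forces either an antichain or a descending chain in $\N_0^m$, contradicting Part~(\ref{it:n1}).

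For Part~(\ref{it:n2}), no antichain, I would induct on $m$. The base case $m=1$ is immediate: $U(\N_0)=\{\emptyset\}\cup\{[k,\infty):k\in\N_0\}$ is linearly ordered by $\subseteq$. For the inductive step, suppose for contradiction that $(U_i)_{i\in\N_0}$ is an infinite antichain in $U(\N_0^m)$. Slice by the last coordinate: $U_i^{(n)}:=\{a\in\N_0^{m-1}:(a,n)\in U_i\}\in U(\N_0^{m-1})$. For each fixed $i$, the sequence $(U_i^{(n)})_n$ is nondecreasing in $n$ and, by the ACC just established for $U(\N_0^{m-1})$, stabilizes at some $U_i^{(\infty)}$. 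The plan is to apply the inductive hypothesis to $(U_i^{(\infty)})_i$ and then, by further subsequence extractions at each slice level, obtain $i<j$ with $U_i^{(n)}\supseteq U_j^{(n)}$ for every $n$, giving $U_j\subseteq U_i$ and contradicting the antichain assumption.

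The main obstacle will be this last extraction: the stabilization indices $N_i$ of the slice sequences $(U_i^{(n)})_n$ can grow unboundedly with $i$, so one cannot simply diagonalize across the levels $n$ and find a single pair $i<j$ that works at every~$n$. Overcoming this will most likely require either a minimal-bad-sequence (Nash--Williams) argument applied directly to $(U(\N_0^m),\supseteq)$, or a finer pigeonhole exploiting the finiteness of $\min(U_i^{(\infty)})$ to reduce the comparison across all slices to boundedly many cases. This is the technical core of the theorem and is where the work of Maclagan and Aschenbrenner--Pong cited in the statement enters.
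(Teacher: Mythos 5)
Your Part~\eqref{it:n1} and the ascending-chain half of Part~\eqref{it:n2} are fine. The induction-plus-nondecreasing-subsequence proof of Dickson's Lemma is a standard and correct alternative to the paper's single application of Ramsey's theorem with colours in $\{1,2\}^{\{1,\dots,m\}}$, and your bad-sequence argument for the absence of ascending chains in $U(\N_0^m,\le)$ is exactly the paper's Lemma~\ref{lem:acc}.

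The antichain half of Part~\eqref{it:n2} is, as you concede yourself, not proved: you reach the point where the stabilization indices of your slice sequences grow unboundedly and then only name two possible strategies without carrying either out. Since this is precisely the nontrivial content of the statement (the Maclagan / Aschenbrenner--Pong result), this is a genuine gap. The idea you are missing, and which the paper uses, is to avoid comparing $U_i$ and $U_j$ slice by slice altogether. Encode each upward closed $F\subseteq\N_0^m$ by the single antitone function $\Phi_F:\N_0^{m-1}\to\N_0\cup\{\infty\}$, $\Phi_F(a):=\min\{c\in\N_0 \mid (a,c)\in F\}$ (and $\infty$ if no such $c$ exists); then $F\subseteq G$ if and only if $\Phi_F\ge\Phi_G$ pointwise. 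Given an antichain $(F_i)_{i\in\N_0}$, each pair $i<j$ yields \emph{one} witness point $a^{(i,j)}$ with $\Phi_{F_j}(a^{(i,j)})<\Phi_{F_i}(a^{(i,j)})$. Colouring the \emph{three}-element subsets $\{i,j,k\}$ according to the componentwise comparison of $a^{(i,j)}$ with $a^{(j,k)}$ and applying Ramsey's theorem, one may pass to indices $t_0<t_1<\cdots$ with $a^{(t_r,t_{r+1})}\le a^{(t_{r+1},t_{r+2})}$ for all $r$; antitonicity of $\Phi_{F_{t_{r+1}}}$ then chains the strict inequalities at consecutive witnesses into an infinite strictly decreasing sequence $\Phi_{F_{t_0}}(a^{(t_0,t_1)})>\Phi_{F_{t_1}}(a^{(t_1,t_2)})>\cdots$ in $\N_0\cup\{\infty\}$, a contradiction. (Alternatively, Section~\ref{sec:other} of the paper derives the same statement from Theorem~\ref{thm:a}(2) by quasi-embedding $\N_0^m$ into $(\{1,\dots,m\}^*,\le_e)$ via $x\mapsto 1^{x_1}\cdots m^{x_m}$ and lifting the quasi-embedding to upward closed sets via Lemma~\ref{lem:qeu}; your ``minimal bad sequence'' suggestion is indeed how that theorem is proved, but for the reversed inclusion one needs maximal co-bad sequences and the left-quotient machinery of Lemma~\ref{lem:XaX}.) Either route would close your gap, but neither is carried out in your write-up.
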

It is easy to see that $(\N_0^m, \le)$ has no descending chain.
In 1913, Dickson proved that $(\N_0^m, \le)$ has no antichain, a fact
that lies at the basis of the theory of Gr\"obner bases \cite{BL:AS,Bu:EAKF}.
This fact can be stated differently. We call an ideal of the
polynomial ring $\mathbb{Q}[x_1, \ldots, x_N]$ \emph{monomial} if it is
generated by monomials. Then Dickson's Lemma states that
every monomial ideal is finitely generated (which of course also
follows from Hilbert's Basis Theorem). A somewhat surprising fact is
that the set of monomial ideals of $\mathbb{Q}[x_1, \ldots, x_n]$ has
no antichain, which has been proved in \cite{Ma:AOMI},
but probably much before in an order theoretic setting: in this setting,
the result states that the set of upward closed subsets of
$(\N_0^m, \le)$ has no antichain. A direct proof is given at the
end of Section~\ref{sec:dickson}. Another proof using an ordering
of words over a finite alphabet that goes back to G.\ Higman
\cite{Hi:OBDI} is given in Section~\ref{sec:other}. This is
the second type of order relations we study:
For a finite alphabet $A$, we say that a word $u \in A^* = \bigcup_{n \in \N_0} A^n$ \emph{embeds into}
a word $v$ if $u$ can be obtained from $v$ by cancelling some letters and in this case write $u\le_e v$.
For example $u = aabbca$ embeds into
$v = \underline{a}b\underline{a}\underline{b}a\underline{b}\underline{c}\underline{a}c$.
It follows from \cite{Hi:OBDI} that for a finite set $A$, $(A^*, \le_e)$ has no antichain.
Also, the upward closed subsets of $(A^*, \le_e)$ have no antichain \cite{Na:OWLS}.
Formally, we define when $x \le_e y$ holds by
recursion on the length of $x$.
First, the empty word $\emptyset$ satisfies $\emptyset \le y$ for
all $y \in A^*$. If $x = a u$ with $a \in A$ and $u \in A^*$,
then $x \le_e y$ if there are words $v,w \in A^*$ such that
$y = vaw$ and $u \le_e w$. Then we have:
\begin{thm} \label{thm:a}
  Let $A$ be a finite set.
  \begin{enumerate}
  \item \label{it:a1} $(A^*, \le_e)$ has no descending chain and no antichain \cite{Hi:OBDI}.
  \item \label{it:a2} $(U(A^*, \le_e), \subseteq)$ has no ascending chain and no antichain.
  \end{enumerate}
\end{thm}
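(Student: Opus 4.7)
The plan is to prove part~(\ref{it:a1}) by Nash-Williams' minimal-bad-sequence argument, and then to derive the ascending chain portion of~(\ref{it:a2}) directly from~(\ref{it:a1}); the no-antichain portion of~(\ref{it:a2}) is the deepest claim and will need further work.

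For the descending chain assertion in~(\ref{it:a1}), the observation that $u <_e v$ forces $|u|<|v|$ reduces the claim to the well-ordering of $\N_0$. For antichains in~(\ref{it:a1}), call a sequence $(w_i)_{i \in \N_0}$ in $A^*$ \emph{bad} if there are no indices $i<j$ with $w_i \le_e w_j$; the existence of an antichain or a descending chain is equivalent to the existence of a bad sequence, so it suffices to rule these out. Assuming one exists, I construct a bad sequence recursively by selecting, having fixed $w_0,\dots,w_{i-1}$, a word $w_i$ of minimum length among all continuations to an infinite bad sequence. No $w_i$ is empty, so I write $w_i = a_i u_i$ with $a_i \in A$. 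Since $A$ is finite, the pigeonhole principle produces a letter $a$ for which $I := \{i : a_i = a\}$ is infinite; enumerating $I = \{i_0 < i_1 < \cdots\}$, I form the hybrid sequence
\[
  w_0, w_1, \ldots, w_{i_0-1}, u_{i_0}, u_{i_1}, u_{i_2}, \ldots .
\]
Minimality of the original sequence at position $i_0$ (using $|u_{i_0}| < |w_{i_0}|$) forces the hybrid to be good, producing $j<k$ whose $j$-th entry embeds in its $k$-th. The three cases, namely two entries among the $w_\ell$ with $\ell<i_0$, one $w_\ell$ together with a $u_{i_r}$, and two $u_{i_r}$'s, all transfer back (using $u_{i_r} \le_e w_{i_r}$ and $a u_{i_r} \le_e a u_{i_s} = w_{i_s}$ for $r<s$) to an embedding within the original $w$-sequence, contradicting its badness.

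For the ascending chain part of~(\ref{it:a2}), given a strictly increasing chain $U_0 \subsetneq U_1 \subsetneq \cdots$ I pick $x_i \in U_{i+1} \setminus U_i$; because $U_{i+1}$ is upward closed, $\ups x_i \subseteq U_{i+1} \subseteq U_j$ for every $j>i$, so $x_i \le_e x_j$ would force $x_j \in U_j$, contradicting $x_j \in U_{j+1} \setminus U_j$. Hence $(x_i)$ is a bad sequence in $(A^*, \le_e)$, contradicting~(\ref{it:a1}).

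The main obstacle is the no-antichain part of~(\ref{it:a2}); note that wpo-ness of $(P, \le)$ does not in general imply wpo-ness of $(U(P), \subseteq)$ (Rado's counterexample), so the argument must exploit something specific to $A^*$. Each $U \in U(A^*)$ is determined by its finite antichain of minimal elements $\min(U)$, and $U \subseteq V$ translates to the Smyth condition that every element of $\min(V)$ is dominated from below by some element of $\min(U)$. My plan is to attempt a second application of the minimal-bad-sequence technique, now on $(U(A^*), \subseteq)$: supposing $(U_n)$ is a bad sequence of upward closed sets, choose it minimally (for instance, by minimising at step $n$ the cardinality $|\min(U_n)|$, or the total length $\sum_{w \in \min(U_n)} |w|$), and combine part~(\ref{it:a1}) applied to a diagonally extracted sequence of generators with the minimality hypothesis to force a contradiction. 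Should the direct route resist, the fallback is to prove the stronger statement that $(A^*, \le_e)$ is a better-quasi-order in the sense of Nash-Williams, a property that is known to transfer to $U(\cdot)$.
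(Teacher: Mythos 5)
Your treatment of part~(\ref{it:a1}) is correct and is essentially the paper's proof: the length argument for descending chains, and Nash--Williams' minimal bad sequence combined with pigeonhole on the first letter and the hybrid sequence $w_0,\dots,w_{i_0-1},u_{i_0},u_{i_1},\dots$ for antichains. Your derivation of the ascending-chain part of~(\ref{it:a2}) by picking $x_i \in U_{i+1}\setminus U_i$ is also exactly the paper's Lemma on ascending chains of upward closed sets. You are also right that the no-antichain part of~(\ref{it:a2}) is the deep claim and that it does not follow from wpo-ness of $(A^*,\le_e)$ alone.

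However, for that part you give only a plan, not a proof, and the plan as stated does not close. Two concrete issues. First, the extremal sequence you need is a \emph{maximal} one, not a minimal one: an antichain of upward closed sets is a ``co-bad'' sequence (no $i<j$ with $U_i \supseteq U_j$), and the relevant well-foundedness is the absence of ascending $\subseteq$-chains (which you have already established), so at each step you should choose $U_j$ maximal with respect to $\subseteq$ among all continuations to a co-bad sequence. Minimising $|\min(U_n)|$ or $\sum_{w\in\min(U_n)}|w|$ does produce some extremal sequence, but you give no mechanism for exploiting it. Second, and more importantly, the step that actually produces a contradiction requires a device you have not supplied: the left quotient $a^{-1}X := \{y : ay \in X\}$ together with the set $\Som(X)$ of first letters of $\le_e$-minimal elements of $X$. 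The key facts are that $X \subseteq a^{-1}X$ for every upward closed $X$ and every $a$, that the inclusion is \emph{strict} when $a \in \Som(X)$ (this is what lets you substitute $b^{-1}X_{s(i)}$ into the maximal co-bad sequence and violate maximality), and that $X \subseteq Y$ follows from $b^{-1}X \subseteq b^{-1}Y$ for all $b \in \Som(X)$ when $X \neq A^*$. With these in hand, one pigeonholes on $\Som(X_i)$ (finitely many values), applies Ramsey to make the pattern of inclusions among the quotients $b^{-1}X_i$ uniform, and in either case reaches a contradiction with co-badness or maximality. Without this quotient machinery (or an equivalent substitute), ``combine part~(1) applied to a diagonally extracted sequence of generators with the minimality hypothesis'' is not yet an argument; note in particular that the minimal generators of $U_j$ need not be comparable to those of $U_i$ in any way controlled by $U_i \not\supseteq U_j$. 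Your fallback via better-quasi-orders would work but is a substantially heavier theory than what is needed here.
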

We will give a proof of this theorem in Section~\ref{sec:higman}.
  We also investigate the following ordering of words used in \cite{AMM:OTNO}.
  Let $A$ be a finite set, and let $B := (A \times \{0\}) \cup
  (A \times \{1\})$.
  We define a mapping $\varphi : A^* \to B^*$
  by $\varphi (a_1, \ldots ,a_n) := (b_1, \ldots, b_n)$
  with $b_i := (a_i, 0)$ if $a_i \not\in \{a_1, \ldots, a_{i-1}\}$
  and $b_i := (a_i, 1)$ if $a_i \in \{a_1, \ldots, a_{i-1}\}$.
  For $u = (a_1, \ldots, a_n)$, we use $S(u)$ to denote the
  set of letters that occur in $u$, formally
  $S(u) := \{ a_i \mid i \in \{1,\ldots, n\} \}$. 
  For $u, v \in A^*$, we say that $u \le_E v$ if
  $\varphi (u) \le_e \varphi (v)$ and $S(u) = S(v)$.
\begin{thm} \label{thm:E}
  Let $A$ be a finite set.
  \begin{enumerate}
  \item \label{it:E1} $(A^*, \le_E)$ has no descending chain and no antichain
           \cite[Lemma~3.2]{AMM:OTNO}.
  \item \label{it:E2} $(U(A^*, \le_E), \subseteq)$ has no ascending chain and no antichain.
  \end{enumerate}
\end{thm}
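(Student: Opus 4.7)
The key structural observation is that $u \le_E v$ forces $S(u) = S(v)$, so $(A^*, \le_E)$ splits as a finite disjoint union $\bigsqcup_{T \subseteq A} (A^*_T, \le_E)$, where $A^*_T := \{u \in A^* \mid S(u) = T\}$ and no element of one component is comparable with one of another. On each component the defining condition reduces to $\varphi(u) \le_e \varphi(v)$, and since $\varphi$ is injective (the first coordinates recover the input word), $\varphi$ restricts to an order embedding of $(A^*_T, \le_E)$ into $(B^*, \le_e)$. Part~\eqref{it:E1} follows: a putative descending chain or antichain in $(A^*, \le_E)$ has, by pigeonhole on the finite index set $\mathcal{P}(A)$, an infinite subsequence contained in a single component $A^*_T$, which embeds via $\varphi$ into $(B^*, \le_e)$ and produces a descending chain or antichain there, contradicting Theorem~\ref{thm:a}\eqref{it:a1}.

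For part~\eqref{it:E2}, upward-closed subsets of a disjoint union of posets are tuples of upward-closed subsets of the components, so there is an order isomorphism
\[
(U(A^*, \le_E), \subseteq) \;\cong\; \prod_{T \subseteq A} (U(A^*_T, \le_E), \subseteq)
\]
under componentwise inclusion, with a finite indexing. The plan is then to show (a) that each factor has no ascending chain and no antichain, and (b) that these two properties are preserved under finite products of posets. For (b) I would use the standard Ramsey-style extraction: from any infinite sequence in the product, pass to subsequences on which the first coordinate, then the second, and so on, becomes non-decreasing, essentially the argument underlying Theorem~\ref{thm:n}\eqref{it:n2}. For (a), identify $(A^*_T, \le_E)$ with the subposet $Q_T := \varphi(A^*_T)$ of $(B^*, \le_e)$, and consider the restriction map $r \colon U(B^*, \le_e) \to U(Q_T, \le_e)$ defined by $V \mapsto V \cap Q_T$. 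It is order-preserving, and surjective because every $W \in U(Q_T, \le_e)$ satisfies $(\ups W) \cap Q_T = W$. An ascending chain or antichain in $U(Q_T, \le_e)$ therefore lifts through $r$ to a similar sequence in $U(B^*, \le_e)$, contradicting Theorem~\ref{thm:a}\eqref{it:a2}.

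The main obstacle is the transfer step in (a): one must check that the lifted sequences retain their ``bad'' character. For an antichain $(W_i)$ in $U(Q_T, \le_e)$, any choice of preimages $V_i$ (for instance $V_i := \ups W_i$) works, because $V_i \subseteq V_j$ together with the order-preservation of $r$ would force $W_i \subseteq W_j$. For a strict ascending chain $W_1 \subsetneq W_2 \subsetneq \cdots$, the canonical lifts $V_i := \ups W_i$ are nested (since $W_i \subseteq W_{i+1}$) and pairwise distinct (since $r(V_i) = W_i$), yielding the required infinite strict ascending chain upstairs. Once this is in place, the remainder is structural bookkeeping combining the disjoint-union decomposition with the preservation of the no-ascending-chain, no-antichain property under finite products.
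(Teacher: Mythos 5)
Your argument is correct, and its core strategy coincides with the paper's: slice everything according to the set of letters $S(u)$, and reduce the antichain question for $U(A^*,\le_E)$ to a finite power of $U(B^*,\le_e)$, which is handled by the Ramsey/Dickson product argument. The packaging differs in one genuine respect. The paper never works with upward closed subsets of a \emph{subposet} of $B^*$: it quasi-embeds $(A^*,\le_E)$ into $B^*\times\mathcal{P}(A)$, applies Lemma~\ref{lem:qeu} and Corollary~\ref{cor:emb} to pass to upward closed sets, and then observes that each slice $X(M_i,S)$ of an upward closed $M_i\subseteq B^*\times\mathcal{P}(A)$ is upward closed in all of $B^*$, so Theorem~\ref{thm:a}\eqref{it:a2} applies verbatim. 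You instead decompose $U(A^*,\le_E)\cong\prod_T U(A^*_T,\le_E)$ directly, identify each factor with $U(Q_T,\le_e)$ for the subposet $Q_T=\varphi(A^*_T)$ of $B^*$, and must then add a transfer step: the restriction surjection $r\colon U(B^*,\le_e)\to U(Q_T,\le_e)$, $V\mapsto V\cap Q_T$, together with the canonical lifts $V_i:=\ups W_i$. That lifting argument is the one ingredient not present in the paper, and you handle it correctly for both antichains and strict ascending chains; the identity $(\ups W)\cap Q_T=W$ is exactly what makes it work. What your route buys is that it avoids introducing the auxiliary order on $B^*\times\mathcal{P}(A)$ and the quasi-embedding machinery of Lemma~\ref{lem:qeu}; what it costs is precisely this extra surjectivity/lifting lemma. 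Two small points to tidy in a final write-up: in your product step (b) the extracted subsequences should be coordinatewise \emph{non-increasing} (constant or descending in each factor, by Lemma~\ref{lem:chainsexist} applied to posets with no ascending chain and no antichain), not non-decreasing; and for part~\eqref{it:E1} you should note explicitly that injectivity of $\varphi$ is what turns the image of an antichain into an antichain.
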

The first listed author has used these results on several occasions:
in \cite{Ai:CMCO}, the absence of antichains in $(A^*, \le_e)$ is
used to establish that every constantive Mal'cev clone on a finite
set is finitely related. In \cite{AM:SOCO}, it is proved that
the set of admissible higher commutator operations on a finite congruence
lattice is well partially ordered by a natural ordering.
The variant of Higman's embedding ordering considered in Theorem~\ref{thm:E} was used in \cite{AMM:OTNO}
to generalize the results from \cite{Ai:CMCO}, resulting
in a proof that every finite algebra with few subpowers is finitely related,
and \cite{AM:FGEC} uses the same ordering to prove that
every subvariety of the variety generated by such an algebra is finitely
generated.
Recently, McDevitt \cite{Mc:ACOR} gave a construction of a large
class of word orderings generalizing $\le_e$ and $\le_E$, and he
established a sufficient criterion \cite[Proposition~47]{Mc:ACOR} for
such orders to be well partial orders, thereby generalizing
Theorems~\ref{thm:a}\eqref{it:a1} and \ref{thm:E}\eqref{it:E1}.

An open question in clone theory is whether
there is a finite set with an antichain of clones containing a Mal'cev operation.
One motivation for proving the absence of antichains is that
this absence often
allows testing certain properties of a structure by considering whether
it
contains finitely many forbidden substructures \cite{RS:GM}.
The aim of this note is to establish the order theoretic results that
are listed above and  
used in \cite{Ai:CMCO, AM:SOCO, AMM:OTNO, AM:FGEC} in a rather direct way.
In particular, we will not resort to the theory of better quasi orderings \cite{La:WASO}.
The note is self-contained, in particular we introduce the well known and
very useful concept of 
\emph{minimal bad sequences} due to \cite{Na:OFT}, although this is done in a similar way at
several other places (cf. \cite{AH:FTIS}).
However, we will not give a proof of the following theorem due to
F.P.\ Ramsey \cite{Ra:OAPO} (cf. \cite{Ne:RT}): Denoting the $2$-element subsets
of $\N_0$ by $\VecTwo{\N_0}{2}$,
Ramsey's Theorem states that for every finite set $T$ and for every
$c : \VecTwo{\N_0}{2} \to T$, there is an infinite subset $Y$ of $\N_0$ such
that the restriction of $c$ to $\VecTwo{Y}{2}$ is constant.

Most of the results and proofs in this note are well known and can
be found, e.g., in \cite{AH:FTIS}. However, we believe that the
the two proofs of Theorem~\ref{thm:n}\eqref{it:n2} and the proof
of Theorem~\ref{thm:a}\eqref{it:a2} are new. The derivation of
Theorem~\ref{thm:E} from Theorem~\ref{thm:a} using quasi-embeddings
(cf. \cite{AH:FTIS}) was suggested to the author by N.\ Ru\v{s}kuc
in 2015 (cf. \cite{Ru:WICA}).
Considering \cite{La:WASO,Na:OWLS,AH:FTIS}, the reader will easily find out
that the theory of well quasi orders and better quasi orders is much
deeper than scratched in the present note. The aim of the present note
is to give easily accessible proofs for some basic and very useful results
in this theory. The first listed author has faced the need
for such proofs when teaching the basics of Gr\"obner Basis Theory, e.g., the existence of universal Gr\"obner bases (cf. \cite{Ai:KA}) or, in universal
algebra \cite{BS:ACIU}, the order theoretical
foundations of the finite relatedness of finite algebras with few
subpowers \cite{AMM:OTNO}. 
\section{Basics of order theory}

It can easily be shown, using, however, some form of the axiom of choice,
that a partially ordered set $\algop{A}{\le}$ has no descending chain
if and only if every finite subset $X$ of $A$ contains a minimal element.
A sequence $(b_i)_{i \in \N_0}$ is a \emph{subsequence} of $(a_i)_{i \in \N_0}$ if
there is a mapping $t : \N_0 \to \N_0$ that satisfies
$i < j \Rightarrow t(i) < t(j)$ for all $i, j \in \N_0$ and
for all $i \in \N_0$, we have $b_i = a_{t(i)}$.  In this case,
$(b_i)_{i \in \N_0} = (a_{t(i)})_{i \in \N_0}$.
\begin{lem} \label{lem:chainsexist}
  Let $\ob{A} = \algop{A}{\le}$ be a partially ordered set,
  and let $S = (a_i)_{i \in \N_0}$ be a sequence from $A$. Then
  $S$ has a subsequence $T = (a_{t(i)})_{i \in \N_0}$ such that one
  of the following conditions holds:
  \begin{enumerate}
  \item $T = (a_{t(i)})_{i \in \N_0}$ is \emph{constant}, which means
    that for all $i, j \in \N_0$, $a_{t(i)} = a_{t(j)}$.
  \item $T = (a_{t(i)})_{i \in \N_0}$ is a \emph{descending chain}, which means
    that for all $i, j \in \N_0$, $i < j \Rightarrow a_{t(i)} > a_{t(j)}$.
   \item $T = (a_{t(i)})_{i \in \N_0}$ is an \emph{ascending chain}, which means
      that for all $i, j \in \N_0$, $i < j \Rightarrow a_{t(i)} < a_{t(j)}$.   
   \item $T = (a_{t(i)})_{i \in \N_0}$ is an \emph{antichain}, which means
     that for all $i, j \in \N_0$, $i < j \Rightarrow a_{t(i)} \perp a_{t(j)}$.
  \end{enumerate}
\end{lem}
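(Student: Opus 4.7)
The plan is to invoke Ramsey's Theorem (stated in the introduction) on a suitable $4$-coloring of the pairs of natural numbers induced by the sequence $S$.

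First I would set $T := \{=, <, >, \perp\}$ and define a coloring $c : \VecTwo{\N_0}{2} \to T$ as follows. Given $\{i,j\} \in \VecTwo{\N_0}{2}$ with $i < j$, compare $a_i$ and $a_j$ in $\algop{A}{\le}$ and let $c(\{i,j\})$ be whichever of $=$, $<$, $>$, $\perp$ holds between them; since $\algop{A}{\le}$ is a partial order, exactly one of these four relations applies, so $c$ is well defined. Then Ramsey's Theorem yields an infinite set $Y \subseteq \N_0$ on which $c$ is constant; enumerating $Y$ in strictly increasing order as $Y = \{t(0) < t(1) < t(2) < \cdots\}$ produces the desired strictly increasing indexing function $t : \N_0 \to \N_0$.

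The four cases for the constant value of $c$ on $\VecTwo{Y}{2}$ correspond directly to the four alternatives in the lemma: value $=$ gives a constant subsequence, value $<$ gives an ascending chain, value $>$ gives a descending chain, and value $\perp$ gives an antichain. In each case, the verification for arbitrary $i < j$ with $i, j \in \N_0$ is immediate from the fact that $\{t(i), t(j)\} \in \VecTwo{Y}{2}$ and $t(i) < t(j)$.

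I do not expect any real obstacle here beyond correctly reducing the problem to Ramsey's Theorem; the only point that deserves care is the observation that the four possibilities between two elements of a poset are mutually exclusive and exhaustive, so that $c$ is indeed a function into a $4$-element set, which makes Ramsey's Theorem directly applicable.
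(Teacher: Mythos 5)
Your proposal is correct and follows essentially the same route as the paper: both define the $4$-coloring of pairs $\{i,j\}$ (with $i<j$) according to which of $a_i = a_j$, $a_i < a_j$, $a_i > a_j$, $a_i \perp a_j$ holds, apply Ramsey's Theorem to obtain an infinite homogeneous set $Y$, and take its increasing enumeration as the index function $t$. Your explicit remark that the four relations are mutually exclusive and exhaustive in a partial order is a welcome detail the paper leaves implicit.
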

\emph{Proof:} We define a coloring $c$ of the $2$-element subsets of $\N_0$.
Let $i, j \in \N_0$ with $i < j$. We set
$c ( \{ i, j\} ) := 1$ if $a_i = a_j$,
$c ( \{ i, j\} ) := 2$ if $a_i < a_j$,
$c ( \{ i, j\} ) := 3$ if $a_i > a_j$, and
$c ( \{ i, j\} ) := 4$ if $a_i \perp a_j$.
By Ramsey's Theorem, $\N$ contains an infinite subset $Y$ such that
$c$ is constant on $\VecTwo{Y}{2}$. We let $t : \N_0 \to Y$ be an injective
increasing function from $\N_0$ into $Y$. Then $T := (a_{t(i)})_{i \in \N_0}$ is
the required subsequence. \qed

\section{Bad sequences}

\begin{de}
  Let $\algop{A}{\le}$ be a partially ordered set.
  A sequence $(a_i)_{i \in \N}$ from $A$ is \emph{good} if there are
  $i, j \in \N_0$ such that $i < j$ and $a_i \le a_j$, and
  it is \emph{bad} if it is not good.
\end{de}

\begin{lem} \label{lem:badseq}
  Let $\ob{A} = \algop{A}{\le}$ be an ordered set. The following are
  equivalent:
  \begin{enumerate}
  \item Every sequence from $A$ is good.
  \item $\ob{A}$ has no descending chain and no antichain.
  \end{enumerate}
\end{lem}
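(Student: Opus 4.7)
The plan is to prove the two implications separately, with the backward direction as the substantial one relying crucially on Lemma~\ref{lem:chainsexist}.

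For (1)$\Rightarrow$(2), I would argue by contrapositive. Suppose $\ob{A}$ contains a descending chain $(a_i)_{i \in \N_0}$. Then for any $i < j$ we have $a_i > a_{i+1} > \cdots > a_j$, so in particular $a_i > a_j$ and thus $a_i \not\le a_j$; hence the sequence is bad. The same reasoning applied to an antichain $(a_i)_{i \in \N_0}$ shows that for $i < j$ we have $a_i \perp a_j$, so again $a_i \not\le a_j$, and the sequence is bad. Either way, (1) fails.

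For (2)$\Rightarrow$(1), let $(a_i)_{i \in \N_0}$ be an arbitrary sequence from $A$. By Lemma~\ref{lem:chainsexist}, it has a subsequence $(a_{t(i)})_{i \in \N_0}$ which is constant, a descending chain, an ascending chain, or an antichain. By assumption $\ob{A}$ has no descending chain and no antichain, so the subsequence must be either constant or an ascending chain. In both cases $a_{t(0)} \le a_{t(1)}$, and since $t(0) < t(1)$, the indices $i := t(0)$ and $j := t(1)$ witness that the original sequence $(a_i)_{i \in \N_0}$ is good.

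There is no real obstacle here: the forward direction is essentially unpacking the definitions, and the reverse direction is an immediate application of the Ramsey-type dichotomy already established in Lemma~\ref{lem:chainsexist}. The only small care needed is to observe that both the constant and the strictly ascending case yield a comparable pair $a_{t(0)} \le a_{t(1)}$, which is exactly what is required to certify goodness of the original sequence.
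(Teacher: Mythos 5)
Your proof is correct and follows essentially the same route as the paper: the forward direction by noting that descending chains and antichains are bad, and the backward direction by invoking Lemma~\ref{lem:chainsexist} and observing that the surviving cases (constant or ascending) yield a comparable pair. Your extra remark that the witnesses $t(0)<t(1)$ certify goodness of the \emph{original} sequence, not just of the subsequence, is a small point the paper glosses over.
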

\emph{Proof:}
Assume that every sequence from $A$ is good. Then $\ob{A}$ has no
descending chain and no antichain, since such chains are all bad.
Now assume that $\ob{A}$ has no descending chain and no antichain,
and let $(a_i)_{i \in \N_0}$ be a sequence from $A$. Then by Lemma~\ref{lem:chainsexist},
$(a_i)_{i \in \N_0}$ has a subsequence $T = (a_{t(i)})_{i \in \N_0}$ that is constant, a descending chain,
an ascending chain, or an antichain. Descending chains and antichains are
excluded by the assumptions, thus $T$ is either constant or ascending.
In both cases, $a_{t(1)} \le a_{t(2)}$, and hence $T$ is good. \qed

We call a sequence $(a_i)_{i \in \N_0}$ a \emph{minimal bad sequence}
if it is a bad sequence, and for every $i \in \N_0$
and for every $b < a_i$, every sequence starting with
$(a_0, a_1, \ldots, a_{i-1}, b)$ is good. 

\begin{lem} \label{lem:minbad}
  Let $\ob{A} = \algop{A}{\le}$ be an ordered set.
  If $\ob{A}$ has no descending chain and it has an infinite antichain,
  then it has a minimal bad sequence.
\end{lem}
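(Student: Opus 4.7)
The plan is to carry out the classical minimal-bad-sequence construction by dependent choice, building the sequence coordinate by coordinate and at each stage taking the smallest possible element that still leaves open a bad continuation.

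First, I would note that the hypothesis supplies at least one bad sequence, namely the given infinite antichain. I then construct $a_0, a_1, \ldots$ recursively, maintaining the invariant that the finite tuple $(a_0, \ldots, a_{i-1})$ can be extended to some bad sequence. Assuming this for $i-1$, let
\[ B_i := \{ x \in A \mid \text{some bad sequence starts with } (a_0, \ldots, a_{i-1}, x) \}. \]
The invariant makes $B_i$ nonempty (take the $i$th term of any bad extension of $(a_0, \ldots, a_{i-1})$; for $i = 0$ use the antichain). Since $\ob{A}$ has no descending chain, every nonempty subset of $A$ admits a minimal element, so I pick $a_i$ to be a minimal element of $B_i$; the invariant is preserved because $a_i \in B_i$.

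Second, I would check that the resulting sequence $(a_i)_{i \in \N_0}$ is itself bad. For any pair $i < j$, membership $a_j \in B_j$ furnishes a bad sequence whose first $j+1$ terms are exactly $a_0, a_1, \ldots, a_j$, and the badness of that witness forces $a_i \not\le a_j$. Minimality of the constructed sequence is then immediate from the choice of each $a_i$: if $b < a_i$ and some bad sequence started with $(a_0, \ldots, a_{i-1}, b)$, then $b$ would lie in $B_i$, contradicting minimality of $a_i$ in $B_i$.

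The delicate step is the passage from ``each finite prefix $(a_0, \ldots, a_{i-1})$ extends to a bad sequence'' to ``the whole diagonal sequence $(a_i)_{i \in \N_0}$ is bad'', since a priori distinct indices could require distinct bad continuations; the observation that the witness for $a_j \in B_j$ already contains all previously chosen $a_i$ is what glues the local badness into global badness. The construction also uses dependent choice to select a minimum of each $B_i$, but this is precisely the set-theoretic input that the paper has already flagged before the statement.
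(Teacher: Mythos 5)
Your proof is correct and follows essentially the same construction as the paper's: recursively choose each $a_i$ minimal among the possible $i$-th terms of bad sequences extending the prefix already built, using the absence of descending chains to guarantee minimal elements exist. You additionally spell out the verification that the diagonal sequence is itself bad (via the witness for $a_j \in B_j$ containing all earlier $a_i$), a step the paper leaves implicit.
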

 \emph{Proof:}
We assume that $(A, \le)$ has an antichain. This antichain is
  a bad sequence.
  Inductively, we can define a minimal bad sequence.
  We define $a_0$ to be a minimal element with respect to $\le$ of
  $S_0 := \{ y_0 \mid (y_i)_{i \in \N_0} \text{ is a bad sequence from $A$} \}$.
  For $j \in \N$, having defined $(a_0, \ldots, a_{j-1})$, we let
  \[ 
      S_j := \{ y_j \mid (y_i)_{i \in \N_0} \text{ is a bad sequence from $A$ with }
  (y_0, \ldots, y_{j-1}) = (a_0, \ldots, a_{j-1}) \},
   \]
  and we choose
  $a_j$ to be a minimal element of $S_j$ with respect to $\le$. \qed

  \begin{lem} \label{lem:acc}
  Let $\algop{A}{\le}$ be a partially ordered set.
  The following are equivalent:
  \begin{enumerate}
\item $\algop{A}{\le}$ has no descending chain and no antichain.
\item $\algop{U(A, \le)}{\subseteq}$ has no ascending chain.
   \end{enumerate} 
\end{lem}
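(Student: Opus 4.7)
\emph{Proof plan.}
The plan is to invoke Lemma~\ref{lem:badseq}, which reformulates condition~(1) as the assertion that no bad sequence in $\ob{A}$ exists, and then to set up a tight correspondence between bad sequences in $\ob{A}$ and strictly ascending chains in $U(\ob{A})$.

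For the direction (1)$\Rightarrow$(2), I would argue the contrapositive. Given a strictly ascending chain $U_0 \subsetneq U_1 \subsetneq \cdots$ of upward closed subsets of $A$, I would pick a witness $a_i \in U_{i+1} \setminus U_i$ for each $i \in \N_0$ and claim that $(a_i)_{i \in \N_0}$ is bad. Indeed, for $i < j$ one has $a_i \in U_{i+1} \subseteq U_j$, so upward closure of $U_j$ would push $a_j$ into $U_j$ as soon as $a_i \le a_j$, contradicting $a_j \notin U_j$. By Lemma~\ref{lem:badseq} this bad sequence witnesses the failure of (1).

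For the converse (2)$\Rightarrow$(1), I would again argue the contrapositive. Starting from a bad sequence $(a_i)_{i \in \N_0}$ in $A$ (available via Lemma~\ref{lem:badseq}), set $U_i := \ups \{a_0, \ldots, a_{i-1}\}$. The inclusions $U_i \subseteq U_{i+1}$ are immediate, and strictness follows because $a_i$ lies in $U_{i+1}$ while badness prevents $a_j \le a_i$ for any $j < i$, forcing $a_i \notin U_i$. Hence $(U_i)_{i \in \N_0}$ is a strictly ascending chain in $U(\ob{A})$, contradicting (2).

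I do not anticipate any serious obstacle: the whole argument rests on the dual operations of picking a witness from the difference $U_{i+1} \setminus U_i$ and generating the upward closure of an initial segment of a sequence, each of which visibly converts one kind of object into the other.
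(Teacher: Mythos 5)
Your proposal is correct and follows essentially the same route as the paper: one direction takes upward closures of initial segments of a bad sequence (the paper starts directly from a descending chain or antichain, which amounts to the same thing), and the other picks witnesses $c_i \in C_{i+1}\setminus C_i$ from an ascending chain and checks badness via upward closure, invoking Lemma~\ref{lem:badseq} exactly as you do.
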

\emph{Proof:}
Let $(a_i)_{i \in \N}$ be a descending chain or an antichain from $A$,
and let $B_i := \ups \{a_1,\ldots, a_i\}$ for $i \in \N_0$. 
Then $(B_i)_{i \in \N}$ is an ascending chain.
For the other implication, let $(C_i)_{i \in \N_0}$ be an ascending chain,
and choose $c_i \in C_{i+1} \setminus C_i$. We show that $(c_i)_{i \in \N_0}$
  is bad. Suppose $i < j$ and $c_i \le c_j$. Since $C_{i+1}$ is
  upward closed, we then have $c_j \in C_{i+1}$, and thus
  $c_j \in C_j$. This contradicts the choice of $c_j$ in $C_{j+1} \setminus C_j$.
  Hence $\algop{A}{\le}$ has a bad sequence, and thus by Lemma~\ref{lem:badseq},
  it must have a descending chain or an antichain.
\qed
  
\section{Dicksons's ordering} \label{sec:dickson}

In this section we provide a direct proof for Theorem~\ref{thm:n} by using Ramsey's theorem. In Section~\ref{sec:other} we show that these results can also be obtained as a consequence of Theorem~\ref{thm:a} by using quasi-embeddings. 

\begin{proof}[Proof of {Theorem~\ref{thm:n}\eqref{it:a1}}.]
For  $m \in \N$ and $k \in \{1,\ldots,m\}$ we denote the $m$-tuple $(a_1,\dots,a_m)\in \N_0^m$ by $\vb{a}$ and the
   $k$\,th component of 
   $\vb{a}$ by $\vb{a}_k$. 
   
   It can be easily seen that $(\N_0^m,\le)$ has no descending chain.
   
   We assume that $(\N_0^m,\le)$ has an antichain $(\vb{a}^{(i)})_{i \in \N_0 }$. Now we color the $2$-element subsets of  $\N_0$
   with  the elements of ${\{1,2\}}^{\{1,\dots, m\}}$ as colors.  
   For $i<j$, we set
   \[ 
      c (\{i,j\}) \, (k) := \left\{
        \begin{array}{rl}
            1 & \text{ if } \vb{a}^{(i)}_k \le \vb{a}^{(j)}_k, \\
            2   & \text{ if } \vb{a}^{(i)}_k >   \vb{a}^{(j)}_k.
        \end{array}
        \right.   
   \]
   By Ramsey's Theorem, we find a subsequence $(\vb{a}^{t(i)})_{i \in \N_0}$
    and a color $C \in \{1,2\}^{\{1,\dots, m\}}$ such that for all $i,j \in \N_0$ with $i <j$,
  we have
  $c ( \{t(i), t(j)\} ) = C$. 

   Assume there exists $k\in {\{1,\dots, m\}}$ such that 
   $C(k) = \,\, 2$. 
   Then $\vb{a}^{(t_0)}_k > \vb{a}^{(t_1)}_k > \vb{a}^{(t_2)}_k > \cdots$ in contradiction to the fact that $(\N_0, \le)$ has no descending chain. 
   
   Hence $C(k) = 1$ for all $k$ and therefore  
    $\vb{a}^{(t_0)} \leq \vb{a}^{(t_1)} \leq \vb{a}^{(t_2)} 
     \leq \cdots$, contradicting our assumption of $(\vb{a}^{(i)})_{i \in \N_0 }$ being an antichain.
\end{proof}

\begin{proof}[Proof of {Theorem~\ref{thm:n}\eqref{it:a2}}.]
By Theorem~\ref{thm:n}\eqref{it:a1}, $(\N_0^m, \le)$ has
  no descending chain and no antichain. Hence Lemma~\ref{lem:acc}
  yields that $(U(\N_0^m, \le), \subseteq)$ has no ascending chain.
  It remains to show that $(U(\N_0^m, \le), \subseteq)$ has no antichain.
If $m = 1$, the set of upward closed subsets of $\N_0$ is totally ordered, and hence there cannot be an antichain. In the case
    $m \ge 2$, we encode each upward closed subset of $\N_0^m$ by
a function from $\N_0^{m-1}$ to $\N_0$. 
    For each upward closed subset $F$ of $\N_0^m$ we define a function
    $\Phi_F : \N_0^{m-1} \to \N_0 \cup \{ \infty \}$ by
    \[
       \Phi_F (\vb{a}) := 
          \left\{
             \begin{array}{rl}
                 \min \{ c \in \N_0 |%
                         (\vb{a},c) \in F \}
                        & \text{ if there exists $c' \in \N$ such that
                                  $(\vb{a},c') \in F$ }, \\
                  \infty & \text{ otherwise.}
             \end{array}
          \right.
    \]
    for $\vb{a} \in \N_0^{m-1}$.
    First we show that for all  $\vb{a}, \vb{b} \in \N_0^{m-1}$
    such that $\vb{a} \le \vb{b}$, also $\Phi_F (\vb{a}) \ge \Phi_F (\vb{b})$ holds.
    Therefore, let $c := \Phi_F (\vb{a})$. We assume $c \neq \infty$.
    We have $(\vb{a}, c) \in F$. Since $F$ is an upward closed set, also $(\vb{b}, c) \in F$, and hence
    $\Phi_F (\vb{b}) \le c = \Phi_F (\vb{a})$.
    Moreover, for the upward closed subsets $F,G$ of $\N_0^m$ the inclusion
    $F \subseteq G$ holds if and only if
    $\Phi_F (\vb{a}) \ge \Phi_G (\vb{a})$ for all
    $\vb{a} \in \N_0^{m-1}$.
    
    Let  $(F_i)_{i \in \N_0}$ be an antichain in $(U(\N_0^m, \le), \subseteq)$.
    Thus for $i, j \in \N$ with $i < j$, 
    $F_j \not\subseteq F_i$ holds. Accordingly, there exists
    $\vb{a}^{(i,j)} \in \N_0^{m-1}$ such that
    \[
       \Phi_{F_j} (\vb{a}^{(i,j)}) < \Phi_{F_i}(\vb{a}^{(i,j)}).
    \]
    Now we color the $3$-element subsets of  $\N_0$
   with  the elements of ${\{1,2\}}^{\{1,\dots, m-1\}}$ as colors.  
   For $l \in \{1,\ldots,m-1\}$, we denote the $l$\,th component of
  $\vb{a}^{(i,j)}$ by $\vb{a}^{(i,j)}_l$.
  For $i<j<k$ we define the coloring of $\{i,j,k\}$ in the following way:
   \[ 
      c (\{i,j, k\}) \, (l) := \left\{
        \begin{array}{rl}
            1 & \text{if } \vb{a}^{(i,j)}_l \le \vb{a}^{(j,k)}_l, \\
            2   & \text{if } \vb{a}^{(i,j)}_l  >  \vb{a}^{(j,k)}_l.
        \end{array}
        \right.   
   \]
   By Ramsey's Theorem, we find an infinite subset $T$ of $\N_0$, $T=\{t_1, t_2,\dots\}$, $t_1<t_2<\dots$,
    and a color $C \in \{1,2\}^{\{1,\dots, m-1\}}$ such that for all $i,j,k \in \N_0$ with $i<j<k$,
  we have
  $c ( \{t_i, t_j, t_k\} ) = C$. 
   We now show that $C(l) = \,\, 1$
   for all $l \in \{1,\ldots,m-1\}$.
    
  In contradiction to that, we assume that there exists $l$ such that 
  $C(l)\, = \,\,2$.
  Then
   \[
       \vb{a}^{(t_0, t_1)}_l > \vb{a}^{(t_1, t_2)}_l > \vb{a}^{(t_2,t_3)}_l> \cdots.
   \]  holds.
  Hence we have constructed a descending chain of natural numbers, which is impossible. 
   
  Thus for all $r \in \N_0$ the inequality
  $\vb{a}^{(t_r, t_{r+1})} \le \vb{a}^{(t_{r+1}, t_{r+2})}$ holds.
  Now let $r \in \N_0$. Because of the choice of
  $\vb{a}^{(t_r, t_{r+1})}$, we have
   \[
      \Phi_{F_{t_r}} (\vb{a}^{(t_r, t_{r+1})})
      >
      \Phi_{F_{t_{r+1}}} (\vb{a}^{(t_r, t_{r+1})}).
   \]
  Since $\vb{a}^{(t_r, t_{r+1})} \le \vb{a}^{(t_{r+1}, t_{r+2})}$,
  we also have
   \[
     \Phi_{F_{t_{r+1}}} (\vb{a}^{(t_r, t_{r+1})})
     \ge 
     \Phi_{F_{t_{r+1}}} (\vb{a}^{(t_{r+1}, t_{r+2})}).
   \]
   Hence the sequence $(\Phi_{F_{t_i}} (\vb{a}^{(t_i, t_{i+1})}))_{i \in \N_0}$ is a descending chain in
  $\N_0 \cup \{\infty\}$, which is impossible.

  Consequently there cannot exist an antichain of upward closed subsets of
   $\N_0^m$. 
\end{proof}

\section{Higman's ordering}  \label{sec:higman}

Of the results listed in Section~\ref{sec:intro}, we now derive
Theorem~\ref{thm:a}.

\begin{proof}[Proof of {Theorem~\ref{thm:a}\eqref{it:a1}}.]
Since $u <_e v$ implies that the length of $u$ is strictly
smaller than the length of $v$, $(A^*, \le_e)$ has no
descending chain.
We assume that $(A^*, \le_e)$ has an antichain.
Then by Lemma~\ref{lem:minbad},
we find
a minimal bad sequence
$U = (u_i)_{i \in \N_0}$ be in $A^*$
Such a sequence cannot contain the empty word, since
$u_i = \emptyset$ implies $u_i \le_e u_{i+1}$, and therefore
$U$ is good.
Hence we can write $u_i = a_i v_i$ with $a_i \in A$ and $v_i \in A^*$.
Since $A$ is finite, there is a subsequence $(a_{t(i)})_{i \in \N_0}$ and $b \in A$
such that $a_{t(i)} = b$ for all $i \in \N_0$.
The sequence $(u_1, u_2, \ldots, u_{t(0)-1}, v_{t(0)}, v_{t(1)}, \ldots)$
is smaller, hence good.
  Therefore, we find $i, j \in \N_0$
  such that either
  $i < j < t(0)$ and $u_i \le_e u_j$, or
  $i < t(0)$ and $u_i \le_e v_{t(j)}$, or
  $i < j$ and $v_{t(i)} \le_e v_{t(j)}$.
In the case  
$i < j < t(0)$ and $u_i \le_e u_j$,
$U$ is good, contradicting the assumptions.
If   $i < t(0)$ and $u_i \le_e v_{t(j)}$, then $u_i \le_e v_{t(j)} \le_e a_j v_{t(j)} = u_{t(j)}$,
and thus $U$ is good, contradicting the assumptions.
If  $i < j$ and $v_{t(i)} \le_e v_{t(j)}$, then
$u_{t(i)} = a_{(t(i))} v_{t(i)} = b v_{t(i)} \le_e
b v_{t(j)}  = a_{(t(j))} v_{t(j)} = u_{t(j)}$, and thus
$U$ is good, again contradicting the assumptions. 
\end{proof}

Before proving Theorem~\ref{thm:a}\eqref{it:a2},
we need some preparation.
For $X \subseteq A^*$ and $a \in A$, we define
$a^{-1}X$ by
\[
a^{-1} X := \{ y \in A^* \mid ay \in X \}.
\]
The set of \emph{starting letters of minimal elements} of $X$
is defined by
\[
\Som (X) := \{ a \in A \mid \exists u \in A^* : au \text{ is a minimal
  element of } X
            \}.
\]
\begin{lem} \label{lem:XaX}
  Let $X, Y$ be a upward closed subsets of $\algop{A^*}{\le_e}$, and let $a \in X$.
  Then we have:
  \begin{enumerate}
  \item \label{it:X1}  $X \subseteq a^{-1} X$.
  \item \label{it:X2} If $a \in \Som (X)$, then $X \subset a^{-1} X$.
  \item \label{it:X3} If $X \neq A^*$ and for all $b \in \Som (X)$,
    $b^{-1} X \subseteq b^{-1} Y$. Then $X \subseteq Y$.
  \end{enumerate}
\end{lem}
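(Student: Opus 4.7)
The plan is to dispatch the three parts separately, in each case exploiting the interaction between the embedding relation, the operator $a^{-1}$, and upward closure. I will read the hypothesis as $a \in A$, since $a^{-1}X$ is defined only for letters.

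For \eqref{it:X1}, the key observation is that $y \le_e ay$ holds trivially, since $y$ is obtained from $ay$ by cancelling the leading letter. So for every $y \in X$, upward closure of $X$ gives $ay \in X$, which is exactly $y \in a^{-1}X$.

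For \eqref{it:X2}, I would pick $u \in A^*$ such that $au$ is a minimal element of $X$, which exists by $a \in \Som(X)$. Then $u \in a^{-1}X$ by definition, so combined with \eqref{it:X1} it suffices to exhibit some element of $a^{-1}X \setminus X$, and I claim $u$ itself works. Indeed, $u \le_e au$ but $u \neq au$ (the lengths differ), so $u \in X$ would give $u <_e au$ with both words in $X$, contradicting the minimality of $au$.

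For \eqref{it:X3}, the plan is, given an arbitrary $x \in X$, to extract a nonempty minimal element $m$ of $X$ with $m \le_e x$, and then apply the hypothesis to its first letter. Concretely, the set $\{y \in X : y \le_e x\}$ is nonempty and, since $(A^*, \le_e)$ admits no descending chain by Theorem~\ref{thm:a}\eqref{it:a1}, contains a minimal element $m$; this $m$ is then automatically minimal in $X$ itself. The assumption $X \neq A^*$ rules out $\emptyset \in X$ (otherwise upward closure would force $X = A^*$), so we can write $m = bm'$ with $b \in A$ and $m' \in A^*$, which puts $b \in \Som(X)$. Since $m \in X$ we have $m' \in b^{-1}X \subseteq b^{-1}Y$, so $m = bm' \in Y$; upward closure of $Y$ together with $m \le_e x$ then delivers $x \in Y$. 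The only delicate step is this descending-chain argument in \eqref{it:X3}; the other two parts are essentially definitional.
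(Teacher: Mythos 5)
Your proposal is correct and follows essentially the same route as the paper's own proof: the same definitional argument for \eqref{it:X1}, the same choice of a minimal element $au$ for \eqref{it:X2}, and the same extraction of a nonempty minimal element below $x$ for \eqref{it:X3} (you merely make explicit, via the absence of descending chains, why such a minimal element exists, and you correctly read the hypothesis as $a\in A$).
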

\begin{proof}
For item~\eqref{it:X1}, let $x \in X$. Then $x \le_e ax$, and
therefore, since $X$ is upward closed, $ax \in X$. Thus $x \in a^{-1}X$.

For item~\eqref{it:X2}, we choose $y \in A^*$ such that $ay$ is
a minimal element of $X$ with respect to $\le_e$.
Then $y \in a^{-1}X$. Since
$y <_e ay$ and $ay$ is minimal in $X$, we have $y \not\in X$.
Thus the inclusion $X \subset a^{-1} X$ is indeed proper.

For item~\eqref{it:X3}, we fix $x \in X$. Then there is a minimal
element $y$ of $X$ with respect to $\le_e$ such that $y \le_e x$.
In the case that $y$ is the empty word, $X = A^*$, which is excluded
by the assumptions.
If $y$ is not empty, there is $b \in A$ and $z \in A^*$ such that
$y = bz$. Then $b \in \Som (X)$ and $z \in b^{-1} X$.
Therefore $z \in b^{-1} Y$, and therefore $bz \in Y$. Thus $y \in Y$,
and since $Y$ is upward closed, $x \in Y$.
\end{proof}

\begin{proof}[Proof of {Theorem~\ref{thm:a}\eqref{it:a2}}.]
  By Theorem~\ref{thm:a}\eqref{it:a1}, $(A^*, \le)$ has
  no descending chain and no antichain. Hence Lemma~\ref{lem:acc}
  yields that $(U(A^*, \le_e), \subseteq)$ has no ascending chain.
  We call a sequence $(X_i)_{i \in \N_0}$ \emph{co-good} if
  there are $i, j \in \N$ with $i < j$ and $X_i \supseteq X_j$, and
  \emph{co-bad} otherwise.
  We assume that $U(A^*)$ has an antichain. This antichain is
  a co-bad sequence.
  Inductively, we can define a maximal co-bad sequence.
  We define $X_0$ to be a maximal element with respect to $\subseteq$ of
  $S_0 := \{ Y_0 \mid (Y_i)_{i \in \N_0} \text{ is a co-bad sequence of $U(A^*)$} \}$.
  This maximal element exists because $(U(A^*, \le_e), \subseteq)$ has no ascending chain.
  For $j \in \N$, having defined $(X_0, \ldots, X_{j-1})$, we let
  $S_j := \{ Y_j \mid (Y_i)_{i \in \N_0} \text{ is a co-bad sequence of $U(A^*)$ with }
  (Y_0, \ldots, Y_{j-1}) = (X_0, \ldots, X_{j-1}) \}$, and we choose
  $X_j$ to be a maximal element of $S_j$ with respect to $\subseteq$. 
  
  Since the sequence $(\Som (X_i))_{i \in \N_0}$ can take at most $2^{|A|}$
  values, there is $B \subseteq A$ and a subsequence
  $(X_{t(i)})_{i \in \N_0}$ such that $\Som (X_{t(i)}) = B$ for all $i \in \N_0$.
  Now we color the two element subsets of $\N_0$ with the elements
  of $\{1,2\}^B$ as colors. For $i < j$ and $b \in B$, we set
  $c( \{i, j \} ) \, (b) := 1$ if $b^{-1} X_{i} \not\supseteq b^{-1} X_{j}$,
  $c( \{i, j \} ) \, (b) := 2$ if $b^{-1} X_{i} \supseteq b^{-1} X_{j}$.
  We restrict our coloring to the two element subsets
  of $t[\N_0]$. 
  By Ramsey's Theorem, we find a subsequence $(X_{t(r(i))})_{i \in \N_0} =: (X_{s(i)})_{i \in \N_0}$
    and a color $C \in \{1,2\}^B$ such that for all $i,j \in \N_0$ with $i <j$,
  we have
  $c ( \{s(i), s(j)\} ) = C$. Furthermore, $\Som(X_{s(i)}) = B$ for
  all $i \in \N_0$.

  Let us first consider the case that
  we have  $b \in B$ such that
  $C(b) = 1$. To this end, we consider the sequence
  $Y := (X_1, X_2, \ldots, X_{s(0)-1}, b^{-1} X_{s(0)}, b^{-1} X_{s(1)}, \ldots)$.
  Since $b \in \Som (X_{s(0)})$, Lemma~\ref{lem:XaX}\eqref{it:X2} yields
  $X_{s(0)} \subset b^{-1} X_{s(0)}$. By the maximality of
  $(X_{i})_{i \in \N_0}$, the sequence $Y$ is co-good.
  Therefore, we find $i, j \in \N_0$
  such that either
  $i < j < s(0)$ and $X_i \supseteq X_j$, or
  $i < s(0)$ and $X_i \supseteq b^{-1} X_{s(j)}$, or
  $i < j$ and $b^{-1} X_{s(i)} \supseteq b^{-1} X_{s(j)}$.
  The case $i < j < s(0)$ and $X_i \supseteq X_j$
  cannot occur because
  the sequence $(X_i)_{i \in \N_0}$ is co-bad.
    In the case  $i < s(0)$ and  $X_i \supseteq b^{-1} X_{s(j)}$,
    Lemma~\ref{lem:XaX}~\eqref{it:X1} yields
    $X_i \supseteq b^{-1} X_{s(j)} \supseteq X_{s(j)}$. Again this cannot occur because
    $(X_i)_{i \in \N_0}$ is co-bad.
    In the case $i < j$ and  $b^{-1} X_{s(i)} \supseteq b^{-1} X_{s(j)}$, we obtain
    $c (\{s(i), s(j) \}) \, (b) = 2$, contradicting the assumption $C(b) = 1$.  
  
    Hence we have $C(b) = 2$ for all $b \in B$.
    Therefore, for all $b \in \Som (X_{s(1)})$, we have $b^{-1}  (X_{s(0)})
   \supseteq b^{-1}  (X_{s(1)})$. Since $(X_i)_{i \in \N_0}$ is co-bad,
   we have $X_{s(1)} \not\supseteq X_{s(2)}$. From this, we conclude
     $X_{s(1)} \neq A^*$.
    Now Lemma~\ref{lem:XaX}~\eqref{it:X3}
    yields  $X_{s(0)} \supseteq X_{s(1)}$, contradicting the fact that
    $(X_i)_{i \in \N_0}$ is co-bad.
     \end{proof}

\section{Other well partially ordered sets} \label{sec:other}
We will derive the other results stated in Section~\ref{sec:intro}
by using \emph{quasi-embeddings}:
\begin{de} \cite{AH:FTIS}
  Let $\ob{A} = (A, \le_A)$ and $\ob {B} = (B, \le_B)$ be partially ordered sets.
  A mapping $f : A \to B$ is a \emph{quasi-embedding} from $\ob{A}$
  into $\ob{B}$ if
  for all $a_1, a_2 \in A :
  f (a_1) \le_B f (a_2) \Rightarrow a_1 \le_A a_2$.
\end{de}

\begin{lem}   \label{lem:qe1} \cite{AH:FTIS}
  Let $\algop{A}{\le_A}$ and $\algop{B}{\le_B}$ be partially ordered sets.
  Let $f : A \to B$ be a quasi-embedding from
  $\algop{A}{\le_A}$ into $\algop{B}{\le_B}$.
  Then we have:
  \begin{enumerate}
  \item \label{it:a1} If $\algop{A}{\le_A}$ has an antichain, then
    $\algop{B}{\le_B}$ has an antichain.
  \item \label{it:a3} If $\algop{A}{\le_A}$ has a descending chain, then
    $\algop{B}{\le_B}$ has an antichain or a descending chain.
  \end{enumerate}
\end{lem}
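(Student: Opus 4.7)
The plan is to prove both items by transporting the given sequence in $A$ forward through $f$ to a sequence in $B$, and then exploiting the defining implication of a quasi-embedding, namely $f(a_1) \le_B f(a_2) \Rightarrow a_1 \le_A a_2$, to rule out the unwanted behaviours of the image sequence.

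For item \eqref{it:a1}, given an antichain $(a_i)_{i \in \N_0}$ in $\algop{A}{\le_A}$, I would argue directly that $(f(a_i))_{i \in \N_0}$ is already an antichain in $\algop{B}{\le_B}$: any comparability $f(a_i) \le_B f(a_j)$ pulls back to $a_i \le_A a_j$ by the quasi-embedding property, and the antichain hypothesis on $(a_i)$ then forces $i = j$. Hence no passage to a subsequence is needed here, and $(f(a_i))_{i \in \N_0}$ witnesses the desired antichain in $B$.

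For item \eqref{it:a3}, starting from a descending chain $(a_i)_{i \in \N_0}$ in $\algop{A}{\le_A}$, I would invoke Lemma~\ref{lem:chainsexist} on the image sequence $(f(a_i))_{i \in \N_0}$ to extract a subsequence $(f(a_{t(i)}))_{i \in \N_0}$ that is constant, ascending, descending, or an antichain. The constant and ascending cases both yield $f(a_{t(0)}) \le_B f(a_{t(1)})$, whence $a_{t(0)} \le_A a_{t(1)}$ by the quasi-embedding property; combined with $t(0) < t(1)$, this contradicts $a_{t(0)} >_A a_{t(1)}$, which holds because $(a_i)$ is a descending chain. The two remaining cases give exactly a descending chain or an antichain in $B$, as required.

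I do not anticipate any real obstacle: the argument is a direct application of Lemma~\ref{lem:chainsexist} followed by a definitional unfolding of the quasi-embedding condition. The only small point worth highlighting is that item \eqref{it:a1} does not need Lemma~\ref{lem:chainsexist} at all, since the quasi-embedding property already transports antichains pointwise; the Ramsey-type extraction is only needed in item \eqref{it:a3} to separate the descending and antichain alternatives from the constant and ascending ones.
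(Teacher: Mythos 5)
Your proposal is correct and follows essentially the same route as the paper: item \eqref{it:a1} is proved by pushing the antichain forward through $f$ and pulling comparabilities back, and item \eqref{it:a3} by applying Lemma~\ref{lem:chainsexist} to the image sequence and using the quasi-embedding property to exclude the constant and ascending alternatives. No gaps.
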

\emph{Proof:}
For~\eqref{it:a1}, we let $(a_i)_{i \in \N_0}$ be
an antichain. We define $b_i := f(a_i)$ and we claim
that $(b_i)_{i \in \N}$ is an antichain.
To this end, let $j, k \in \N_0$ with $b_j \le b_k$.
Then $f(a_j) \le f(a_k)$, and therefore $a_j \le a_k$.
Since $(a_i)_{i \in \N_0}$ is an antichain, $j = k$, completing
the proof that $(b_i)_{i \in \N_0}$ is an antichain.

For~\eqref{it:a3},  let $(a_i)_{i \in \N_0}$ be a descending chain.
According to Lemma~\ref{lem:chainsexist}, it is sufficient to show that
that the sequence $(b_i)_{i \in \N_0} := (f(a_i))_{i \in \N_0}$
has no subsequence that is constant or an ascending chain.
Seeking a contradiction, we let $(b_{t(i)})_{i \in \N_0}$ be a subsequence
with $b_{t(1)} \le b_{t(2)} \le \cdots$. Since $f$ is a
quasi-embedding, $a_{t(1)} \le_A a_{t(2)}$, which contradicts the fact that
$(a_i)_{i \in \N_0}$ is descending.
\qed

\begin{lem} \label{lem:qeu}
  Let $\algop{A}{\le_A}$ and $\algop{B}{\le_B}$ be partially ordered sets.
  Let $f : A \to B$ be a quasi-embedding from
  $\algop{A}{\le_A}$ into $\algop{B}{\le_B}$.
  Let
  $\phi : U(A, \le_A) \to U(B, \le_B)$ be defined by
  $\phi (X) := \ups f[X] = \{ b \in B \mid
  \exists x \in X : b \ge_B f(x) \}$.
  Then $\phi$ is a quasi-embedding from
  $(U(A, \le_A), \subseteq)$ into
  $(U(B, \le_B), \subseteq)$.
\end{lem}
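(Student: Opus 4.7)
The plan is to check the two requirements directly from the definitions: first that $\phi$ indeed maps into $U(B, \le_B)$, and second that $\phi$ has the quasi-embedding property. For the first point, $\ups f[X]$ is upward closed in $(B, \le_B)$ by construction (the upward closure of any subset of $B$ is upward closed by definition), so $\phi(X) \in U(B, \le_B)$ holds automatically for every $X \in U(A, \le_A)$.

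For the quasi-embedding property, I would fix $X_1, X_2 \in U(A, \le_A)$, assume $\phi(X_1) \subseteq \phi(X_2)$, and aim to show $X_1 \subseteq X_2$. Starting from an arbitrary $x \in X_1$, reflexivity $f(x) \ge_B f(x)$ places $f(x)$ in $\phi(X_1)$, hence in $\phi(X_2)$ by the assumed inclusion. Unfolding the definition of $\phi(X_2)$ produces some $x' \in X_2$ with $f(x') \le_B f(x)$. The quasi-embedding property of $f$ then gives $x' \le_A x$, and because $X_2$ is upward closed in $(A, \le_A)$, it follows that $x \in X_2$. This yields $X_1 \subseteq X_2$, which is what was needed.

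There is no genuine obstacle; the argument is a short chase through the definitions in which the quasi-embedding hypothesis on $f$ is used exactly once (to pass from $f(x') \le_B f(x)$ to $x' \le_A x$) and the upward closedness of $X_2$ is used exactly once. The only thing worth being careful about is the direction of the implication in the definition of quasi-embedding: from $\phi(X_1) \subseteq \phi(X_2)$ one must extract a \emph{smaller} element in $X_2$ below $f(x)$, so that the quasi-embedding property pushes the inequality back into $(A, \le_A)$ and upward closure can then be invoked on $X_2$ (not $X_1$).
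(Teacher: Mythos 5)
Your proof is correct and follows essentially the same route as the paper's: take $x \in X_1$, observe $f(x) \in \phi(X_1) \subseteq \phi(X_2)$, extract $x' \in X_2$ with $f(x') \le_B f(x)$, apply the quasi-embedding property of $f$ to get $x' \le_A x$, and conclude by upward closedness of $X_2$. The only (harmless) addition is your explicit check that $\phi$ lands in $U(B,\le_B)$, which the paper leaves implicit.
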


\begin{proof}  
We have to show that for all $X, Y \in U(A)$, we have
\begin{equation} \label{eq:x1}
  \ups f[X] \subseteq \ups f[Y] \Rightarrow X \subseteq Y.
\end{equation}
To this end, we fix $X, Y \in U(A)$ and assume
$\ups f[X] \subseteq \ups f[Y]$. We let $x \in X$.
Then $f(x) \in f[X]$. Hence $f(x) \in \ups f[X]$, and therefore
$f(x) \in \ups f[Y]$. Therefore, there exists $y \in Y$ such that
$f(x) \ge_B f(y)$. By the assumptions on $f$, $x \ge_A y$.
Since $Y$ is upward closed, we obtain $x \in Y$. This completes the
proof of~\eqref{eq:x1}.
\end{proof}

\begin{cor} \label{cor:emb}
Let $(A, \le_A)$ and $(B, \le_B)$ be partially ordered sets, and let $f$ be a
quasi-embedding from $(A, \le_A)$ into $(B, \le_B)$.
If $(B, \le_B)$ has no descending chain and no antichain and $(U(B, \le_B), \subseteq)$
has no antichain, then
$(A, \le_A)$ has no descending chain and no antichain, and
$(U(A, \le_A), \subseteq)$ has no ascending chain and no antichain.
\end{cor}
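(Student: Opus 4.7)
The plan is to assemble the corollary directly from Lemma~\ref{lem:qe1}, Lemma~\ref{lem:qeu}, and Lemma~\ref{lem:acc}, since all the heavy lifting has already been done. There is no real obstacle here; the proof is essentially a four-line bookkeeping exercise, and the only thing to get right is which lemma is invoked in which direction.

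First I would handle $(A, \le_A)$ itself. Since $f$ is a quasi-embedding and $(B, \le_B)$ has no antichain, Lemma~\ref{lem:qe1}\eqref{it:a1} (in contrapositive form) gives that $(A, \le_A)$ has no antichain. Similarly, by Lemma~\ref{lem:qe1}\eqref{it:a3}, a descending chain in $(A, \le_A)$ would force $(B, \le_B)$ to have a descending chain or an antichain, both excluded by hypothesis; hence $(A, \le_A)$ has no descending chain.

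Next I would turn to $(U(A, \le_A), \subseteq)$. Having just established that $(A, \le_A)$ has no descending chain and no antichain, Lemma~\ref{lem:acc} immediately yields that $(U(A, \le_A), \subseteq)$ has no ascending chain. For the absence of antichains in $(U(A, \le_A), \subseteq)$, I would invoke Lemma~\ref{lem:qeu}: the induced map $\phi : U(A, \le_A) \to U(B, \le_B)$, $X \mapsto \ups f[X]$, is a quasi-embedding. Since $(U(B, \le_B), \subseteq)$ has no antichain by assumption, Lemma~\ref{lem:qe1}\eqref{it:a1} applied to $\phi$ yields that $(U(A, \le_A), \subseteq)$ has no antichain either. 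This exhausts all four assertions of the corollary.
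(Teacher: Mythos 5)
Your proposal is correct and follows exactly the same route as the paper's own proof: Lemma~\ref{lem:qe1}\eqref{it:a1} and \eqref{it:a3} for the two conditions on $(A,\le_A)$, Lemma~\ref{lem:acc} for the absence of ascending chains in $(U(A,\le_A),\subseteq)$, and Lemma~\ref{lem:qeu} combined with Lemma~\ref{lem:qe1}\eqref{it:a1} for the absence of antichains in $(U(A,\le_A),\subseteq)$. Nothing is missing.
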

\begin{proof}
  From Lemma~\ref{lem:qe1}\eqref{it:a1}, we obtain that $\algop{A}{\le_A}$
  has no antichain. %
  From~Lemma~\ref{lem:qe1}\eqref{it:a3}, we obtain that
  $\algop{A}{\le_A}$ has no descending chain.
  Lemma~\ref{lem:acc} now yields that $(U(A, \le_A), \subseteq)$ has no
  ascending chain.
  By Lemma~\ref{lem:qeu}, there exists a quasi-embedding from
  $(U(A, \le_A), \subseteq)$ into $(U(B, \le_B), \subseteq)$.
  Applying Lemma~\ref{lem:qe1}\eqref{it:a1} to this quasi-embedding, we obtain
  that $(U(A, \le_A), \subseteq)$ has no antichain.
\end{proof}  
  
\begin{proof}[Proof of {Theorem~\ref{thm:n}}.]
    We first construct
  a quasi-embedding from 
  $\ob{A} := (\N_0^m, \le)$ into $\ob{B} = (\{1,\ldots,m\}^*, \le_e)$.
  We define $f:A \to B$ by
 \[
  f (x_1, \ldots,x_m) := \underbrace{11\ldots 1}_{x_1}
                           \underbrace{22\ldots 2}_{x_2}
                           \ldots
                           \underbrace{mm\ldots m}_{x_m}
   = 1^{x_1}2^{x_2} \ldots m^{x_m}
 \]  
 for all $(x_1, \ldots, x_m) \in \N_o^m$.
  Now if $1^{x_1}2^{x_2} \ldots m^{x_m} \le_e
  1^{y_1}2^{y_2} \ldots m^{y_m}$, then
  $(x_1, \ldots, x_m) \le (y_1, \ldots, y_m)$.
 Thus $f$ is a quasi-embedding. By Corollary~\ref{cor:emb} and Theorem~\ref{thm:a},
 $\ob{A}$ has no antichain and no descending chain and $(U(\ob{A}), \subseteq)$
 has no ascending chain and no antichain.
\end{proof}

\begin{proof}[Proof of Theorem~\ref{thm:E}.]
    We define a mapping $f: A^* \to B^* \times \mathcal{P} (A)$ by
    $f (u) := (\varphi (u), S(u))$.
    We order the set $B^* \times \mathcal{P}(A)$ by
    $(u, S) \le (v, T)$ if $u \le_e v$ and $S = T$.
    Then $f$ is a quasi-embedding from $(A^*, \le_E)$ into 
    $B^* \times \mathcal{P} (A)$.
    Since $(B^*, \le_e)$ has no antichain and $\mathcal{P}(A)$
    is finite, $B^* \times \mathcal{P} (A)$ has no antichain.
    Hence Lemma~\ref{lem:qe1}\eqref{it:a1} implies
    that $(A^*, \le_E)$ has no antichain. Since $u <_E v$ implies that the length of $u$ is strictly smaller than the length of $v$, $(A^*, \le_E)$ has no descending chain. Thus, using Lemma~\ref{lem:acc},
    we get that  $(U(A^*, \le_E), \subseteq)$ has no ascending chain.

    It remains to show that $(U(A^*, \le_E), \subseteq)$ has no 
    antichain. Using Corollary~\ref{cor:emb}, it is sufficient to
    show that $C := ( U(B^* \times \mathcal{P} (A), \le), \subseteq )$ has
    no antichain. Seeking a contradiction, we let
    $(M_i)_{i \in \N_0}$ be an antichain in $C$, and for $S \in \mathcal{P} (A)$,
    let 
    $X(M_i, S) := \{ w \in B^* \mid  (w, S) \in M_i \}$.
    Then $( \langle X(M_i, S) \mid S \in \mathcal{P}(A) \rangle )_{i \in \N_0}$
    is a sequence in $(U (B^*, \le_e))^{\mathcal{P(A)}}$. Since 
    $\mathcal{P}(A)$ is finite, and since $U(B^*, \le_e)$ has no ascending chain
    and no antichain
    by Theorem~\ref{thm:a}\eqref{it:a2}, a modification of the argument
     in the proof of Theorem~\ref{thm:n}\eqref{it:n1} can be used to show that 
    $(U (B^*, \le_e))^{\mathcal{P(A)}}$ has no ascending chain and no antichain.
    Hence by the dual of  Lemma~\ref{lem:badseq}, there are $i, j \in \N$ with
    $i < j$ such that $X(M_i, S) \supseteq X(M_j, S)$ for all $S \in \mathcal{P} (A)$.
    Therefore $M_i \supseteq M_j$, contradicting the fact that $(M_i)_{i \in \N_0}$ 
    is an antichain.
  \end{proof}

\section{Acknowledgements}
   The authors would like to thank J.\ Farley and N.\ Ru\v{s}kuc for numerous
   discussions on the theory of ordered sets. Part of this research was done 
   while the first listed author was visiting P.\ Aglian\`{o} at
   the University of Siena, Italy.
   
\bibliographystyle{alpha}

\begin{thebibliography}{AMM14}

\bibitem[AH07]{AH:FTIS}
M.~Aschenbrenner and R.~Hemmecke.
\newblock Finiteness theorems in stochastic integer programming.
\newblock {\em Found. Comput. Math.}, 7(2):183--227, 2007.

\bibitem[Aic09]{Ai:KA}
E.~Aichinger.
\newblock {K}ommutative {A}lgebra und {A}lgebraische {G}eometrie.
\newblock Lecture notes for a course at JKU Linz, Austria. Available at {\tt
  http://www.algebra.uni-linz.ac.at/Students/KommutativeAlgebra/s09/}, June
  2009.

\bibitem[Aic10]{Ai:CMCO}
E.~Aichinger.
\newblock Constantive {M}al'cev clones on finite sets are finitely related.
\newblock {\em Proc. Amer. Math. Soc.}, 138(10):3501--3507, 2010.

\bibitem[AM13]{AM:SOCO}
E.~Aichinger and N.~Mudrinski.
\newblock Sequences of commutator operations.
\newblock {\em Order}, 30(3):859--867, 2013.

\bibitem[AM16]{AM:FGEC}
E.~Aichinger and P.~Mayr.
\newblock Finitely generated equational classes.
\newblock {\em J. Pure Appl. Algebra}, 220(8):2816--2827, 2016.

\bibitem[AMM14]{AMM:OTNO}
E.~Aichinger, P.~Mayr, and R.~McKenzie.
\newblock On the number of finite algebraic structures.
\newblock {\em J. Eur. Math. Soc. (JEMS)}, 16(8):1673--1686, 2014.

\bibitem[AP04]{AP:OOMI}
M.~Aschenbrenner and W.~Y. Pong.
\newblock Orderings of monomial ideals.
\newblock {\em Fund. Math.}, 181(1):27--74, 2004.

\bibitem[BL83]{BL:AS}
B.~Buchberger and R.~Loos.
\newblock Algebraic simplification.
\newblock In {\em Computer algebra}, pages 11--43. Springer, Vienna, 1983.

\bibitem[BS81]{BS:ACIU}
S.~Burris and H.~P. Sankappanavar.
\newblock {\em A course in universal algebra}.
\newblock Springer New York Heidelberg Berlin, 1981.

\bibitem[Buc70]{Bu:EAKF}
B.~Buchberger.
\newblock Ein algorithmisches {K}riterium f\"ur die {L}\"osbarkeit eines
  algebraischen {G}leichungssystems.
\newblock {\em Aequationes Math.}, 4:374--383, 1970.

\bibitem[Dic13]{Di:FOTO}
L.~E. Dickson.
\newblock Finiteness of the odd perfect and primitive abundant numbers with $n$
  distinct prime factors.
\newblock {\em American Journal of Mathematics}, 35(4):413--422, 1913.

\bibitem[Hig52]{Hi:OBDI}
G.~Higman.
\newblock Ordering by divisibility in abstract algebras.
\newblock {\em Proc. London Math. Soc. (3)}, 2:326--336, 1952.

\bibitem[Lav76]{La:WASO}
R.~Laver.
\newblock Well-quasi-orderings and sets of finite sequences.
\newblock {\em Math. Proc. Cambridge Philos. Soc.}, 79(1):1--10, 1976.

\bibitem[Mac01]{Ma:AOMI}
D.~Maclagan.
\newblock Antichains of monomial ideals are finite.
\newblock {\em Proc. Amer. Math. Soc.}, 129(6):1609--1615 (electronic), 2001.

\bibitem[McD18]{Mc:ACOR}
M.~McDevitt.
\newblock A class of rational relations generalizing the subword order.
\newblock {\em Journal of Automata, Languages and Combinatorics},
  23(4):361--386, 2018.

\bibitem[Ne{\v{s}}95]{Ne:RT}
Jaroslav Ne{\v{s}}et{\v{r}}il.
\newblock Ramsey theory.
\newblock In {\em Handbook of combinatorics, Vol.~2}, pages 1331--1403.
  Elsevier, Amsterdam, 1995.

\bibitem[NW63]{Na:OFT}
C.~St. J.~A. Nash-Williams.
\newblock On well-quasi-ordering finite trees.
\newblock {\em Proc. Cambridge Philos. Soc.}, 59:833--835, 1963.

\bibitem[NW64]{Na:OWLS}
C.~St. J.~A. Nash-Williams.
\newblock On well-quasi-ordering lower sets of finite trees.
\newblock {\em Proc. Cambridge Philos. Soc.}, 60:369--384, 1964.

\bibitem[Ram29]{Ra:OAPO}
F.~P. Ramsey.
\newblock {On a problem of formal logic}.
\newblock {\em Proceedings London Mathematical Society (2)}, 30:264--286, 1929.

\bibitem[RS04]{RS:GM}
N.~Robertson and P.~D. Seymour.
\newblock Graph minors. {XX}. {W}agner's conjecture.
\newblock {\em J. Combin. Theory Ser. B}, 92(2):325--357, 2004.

\bibitem[Ru{\v{s}}15]{Ru:WICA}
N.~Ru{\v{s}}kuc.
\newblock Well quasi-order in combinatorics and algebra.
\newblock Talk at the {53rd Summer School on General Algebra and Ordered Sets
  in Srn{\'{i}}, Czech Republic}, September 2015.
\newblock Slides available at {\tt
  http://www.karlin.mff.cuni.cz/\~{}ssaos/2015/}.

\end{thebibliography}
\def\cprime{$'$}

\end{document}